\newtheorem{theorem}{Theorem}[section]
\newtheorem{lemma}[theorem]{Lemma}
\newtheorem{proposition}[theorem]{Proposition}
\newtheorem{corollary}[theorem]{Corollary}
\theoremstyle{definition}
\theoremstyle{definitions}
\newtheorem{definition}[theorem]{Definition}
\newtheorem{remark}[theorem]{Remark}
\newtheorem{example}[theorem]{Example}
\theoremstyle{notations}
\theoremstyle{remarks}
\begin{document}

\author[T. Nasri and B. Mashayekhy]{Tayyebe Nasri$^{a,*}$ and Behrooz Mashayekhy$^b$}

\title[On Exact Sequences of the Rigid Fibrations]
{On Exact Sequences of the Rigid Fibrations}
\thanks{2010 {\it Mathematics Subject Classification}: 14D06, 55Q99,  54H11}
\keywords{Fibration, Rigid covering fibration, Topological homotopy group, Exact sequence}
\thanks{$^*$Corresponding author}
\thanks{E-mail addresses: t.nasri@ub.ac.ir, bmashf@um.ac.ir}
\maketitle

\begin{center}

{\it ${}^a$Department of Pure Mathematics, Faculty of Basic Sciences,  University of Bojnord,
 Bojnord, Iran.}\\
{\it ${}^b$Department of Pure Mathematics, Center of Excellence in Analysis on Algebraic Structures, Ferdowsi University of Mashhad,\\
P.O.Box 1159-91775, Mashhad, Iran.}\\

\end{center}


\begin{abstract}
In 2002, Biss investigated on a kind of fibration which is called rigid covering fibration (we rename it by rigid fibration) with properties similar to covering spaces. In this paper, we obtain a relation between arbitrary topological spaces and its rigid  fibrations. Using this relation we obtain a commutative diagram of homotopy groups and quasitopological homotopy groups and deduce some results in this field.
\end{abstract}
\section{Introduction and Motivation}
Endowed with the quotient topology induced by the natural surjective map $q:\Omega^n(X,x)\rightarrow \pi_n(X,x)$, where $\Omega^n(X,x)$ is the $n$th loop space of $(X,x)$ with the compact-open topology, the familiar homotopy group $\pi_n(X,x)$ becomes a quasitopological group which is called the quasitopological $n$th homotopy group of the pointed space $(X,x)$, denoted by $\pi_n^{qtop}(X,x)$ (see \cite{Bi,Br,Ga}).

In 2002, Biss \cite{Bi} investigated on a kind of fibration which is called rigid covering fibration (we rename it by rigid fibration) with properties similar to covering spaces. He proved that there is a universal rigid fibration for a space if and only if its topological fundamental group is totally path disconnected \cite[Theorem 4.3]{Bi}. The rigid fibrations generalize classical covering spaces, the hypothesis being that the topological fundamental group is totally disconnected rather than discrete as in the classical theory. Since rigid fibrations have unique path lifting, one can apply standard arguments to extend a number of classical theorems about covering spaces to the rigid
fibration setting. For instance, every morphism in the category of rigid fibrations over $X$ is itself a rigid fibration \cite[Lemma 4.5]{Bi}. Also, Biss proved that the coset space of any subgroup $\pi\leq \pi_1^{qtop}(HE)$ has no nonconstant paths, where $HE$ is the Hawaiian earring space. Therefore for any subgroup $\pi$, there is a rigid fibration $p :E\longrightarrow HE$ with $p_*\pi_1(E) = \pi$. On the other hands Brazas \cite{Bra} introduced the $\mathcal{C}$-covering map for a space $X$  which have a unique lifting property with
respect to maps on the objects of $\mathcal{C}$, where  $\mathcal{C}$ is the category of path-connected spaces having
the unit disk as an object and then he defined the category of $\mathcal{C}$-covering maps for a space $X$ denoted by $Cov_{\mathcal{C}}(X)$.  The $\mathcal{C}$-covering map generalize classical covering maps.  In this paper, we consider the category of rigid fibration maps of a space $X$, $RF(X)$, and show that it is a subcategory of $wCov_{\mathcal{C}}(X)$. Then we obtain some results in rigid fibrations. Also, we consider rigid fibrations of two topological spaces $X$ and $Y$ and obtain a relation between these spaces and its rigid fibrations. More precisely, if $p:E_H\longrightarrow X$ and $q:E_G\longrightarrow Y$ are two rigid fibrations of $X$ and $Y$,respectively  such that $p_*\pi_1(E_H)=H$ and $q_*\pi_1(E_G)=G$ and $g:X\longrightarrow Y$ is a continuous map such that $g_*(H)\subseteq G$, then there is a map $\tilde{f}:E_H\longrightarrow E_G$ such that $q\circ f= g\circ p$ and vise versa.
 Then with the above conditions, we obtain a commutative diagram in Set$_*$ with exact rows:
\[\begin{CD}
\cdots @> >> \pi_{n}(F_1)@> i_* >> \pi_n(E_H)@ > p_* >> \pi_n(X) @> d >> \pi_{n-1}(F_1) @> >> \cdots\\
&      & @VV (f|_{F_{1}})_{*} V@V f_*VV@V g_*VV@V (f|_{F_{1}})_{*}VV\\
\cdots @>  >> \pi_{n}(F_2) @ > j_* >>\pi_n(E_G)@>q_* >> \pi_n(Y)@> d' >> \pi_{n-1}(F_2) @> >> \cdots.
\end{CD}\]

In follow, using this diagram we deduce some results about  homotopy groups and quasitopological homotopy groups.
\section{Preliminaries}

In this section, we recall some of the main notion and results of rigid fibrations.
\begin{definition}\cite[Definition 2.1]{Bi}
Let $X$ be a topological space. A fibration $p :E\longrightarrow X$ is said to be a covering fibration if $p_* :\pi_i(E)\longrightarrow \pi_i(X)$ is an isomorphism for $i\geq 2$ and an injection for $i = 1$.
\end{definition}

Recall that we say that a fibration $p :E\longrightarrow X$ has unique path lifting property if for any path $\gamma :I\longrightarrow X$ in $X$ and $e\in E$  with $p(e) = \gamma (0)$, there is a unique lift $\tilde{\gamma} :I\longrightarrow E$ with $\gamma = p\circ\tilde{\gamma}$ and $\tilde{\gamma}(0) = e$. One can show that a fibration has unique path lifting property if and only if all of the fibers have no nonconstant paths \cite[Theorem 2.2.5]{Sp}.

\begin{definition}\cite[Definition 4.1]{Bi}\label{fib}
Let X be a topological space. A fibration $p :E\longrightarrow X$ is called a rigid
covering fibration if it is a covering fibration and if, in addition, each fiber has no
nonconstant paths.
\end{definition}

If we use the homotopy sequence of fibrations for rigid covering fibrations, we see that the condition "covering fibration" in Definition \ref{fib} can be replace with  "fibration". Therefore we can say that A fibration $p :E\longrightarrow X$ is called a rigid covering fibration (rename it by rigid fibration)  if each fiber has no
nonconstant paths.
\begin{theorem}\cite[Theorem 4.3]{Bi}\label{b1}
Let $X$ be a space and $\pi < \pi_1(X)$ a subgroup of the fundamental group
of $X$. If the left coset space $\pi_1^{qtop}(X)/\pi$ has no nonconstant paths, then there is a rigid
covering fibration $p :E\longrightarrow X$ with $p_*\pi_1(E) = \pi$.
\end{theorem}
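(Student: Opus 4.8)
\noi\textbf{Proof proposal.} The plan is to run the classical construction that turns a subgroup into a covering space, with a Hurewicz lifting function taking the place of the evenly-covered neighbourhoods. We may as well assume $X$ is path connected (otherwise everything below only involves the path component of the chosen basepoint $x$). Let $P(X,x)$ be the space of paths $\gamma:I\lo X$ with $\gamma(0)=x$, equipped with the compact-open topology, and let $\mathrm{ev}_1:P(X,x)\lo X$, $\gamma\mapsto\gamma(1)$, be the endpoint map, which is a Hurewicz fibration. Declare $\gamma\sim\delta$ when $\gamma(1)=\delta(1)$ and $[\gamma*\delta^{-1}]\in\pi$; put $E=P(X,x)/\!\!\sim$ with the quotient topology, let $Q:P(X,x)\lo E$ be the quotient map, define $p:E\lo X$ by $p([\gamma])=\gamma(1)$ (so $p\circ Q=\mathrm{ev}_1$ and $p$ is continuous), and set $\tilde x=Q(c_x)$, the class of the constant path, so that $p(\tilde x)=x$.

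First I would analyse the fibers. Since two loops $\gamma,\delta$ at $x$ satisfy $\gamma\sim\delta$ exactly when $[\gamma]$ and $[\delta]$ lie in the same coset of $\pi$, the set $p^{-1}(x)$ is naturally in bijection with $\pi_1^{qtop}(X,x)/\pi$, and one checks this is a homeomorphism (using that $\mathrm{ev}_1^{-1}(x)=\Omega(X,x)$ is a saturated subset of $P(X,x)$, so that $Q$ restricts well to it); thus, by hypothesis, $p^{-1}(x)$ has no nonconstant paths. For an arbitrary $y\in X$, fixing a path $\beta$ from $x$ to $y$ gives a continuous injection from $p^{-1}(y)$ into $\pi_1^{qtop}(X,x)/\pi$ sending $[\gamma]$ to the coset of $[\gamma*\beta^{-1}]$; since a path whose image is a single point is constant, it follows that \emph{every} fiber of $p$ has no nonconstant paths.

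The heart of the matter is to show that $p$ is a fibration, and for this I would write down a lifting function $\Lambda:E\times_X X^I\lo E^I$ by hand. Given $e=[\sigma]$ and a path $\omega$ in $X$ with $\omega(0)=p(e)$, set $\Lambda(e,\omega)(t)=[\sigma*\omega_{[0,t]}]$, where $\omega_{[0,t]}$ denotes $\omega$ restricted to $[0,t]$ and reparametrised onto $I$. This is independent of the chosen representative of $e$ (if $[\sigma*\sigma'^{-1}]\in\pi$ then $[(\sigma*\omega_{[0,t]})*(\sigma'*\omega_{[0,t]})^{-1}]=[\sigma*\sigma'^{-1}]\in\pi$), and plainly $\Lambda(e,\omega)(0)=e$ and $p\circ\Lambda(e,\omega)=\omega$. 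The one substantive point is continuity of $\Lambda$: the concatenation map $P(X,x)\times_X X^I\lo P(X,x)^I$, $(\sigma,\omega)\mapsto\bigl(t\mapsto\sigma*\omega_{[0,t]}\bigr)$, is continuous by a routine compact-open estimate, and I would push it forward along $Q$, the delicate step being to verify that the induced map on $E\times_X X^I$ is continuous for the quotient topology on $E$. \emph{This continuity check---really, keeping the quotient topology on $E$ under control---is the main obstacle}; everything else is formal bookkeeping.

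Finally, granting that $p$ is a fibration with all fibers having no nonconstant paths, $p$ has unique path lifting by \cite[Theorem 2.2.5]{Sp}, and the standard covering-space argument computes $p_*\pi_1(E,\tilde x)$. Unique path lifting together with the homotopy lifting property forces $p_*:\pi_1(E,\tilde x)\lo\pi_1(X,x)$ to be injective: if $p\circ\tilde\gamma$ is null-homotopic, lift a null-homotopy and use uniqueness of lifts to see that $\tilde\gamma$ is null-homotopic. For the image, a loop $\gamma$ at $x$ has the canonical lift $t\mapsto[\gamma_{[0,t]}]$ starting at $\tilde x$, namely $\Lambda(\tilde x,\gamma)$, and by uniqueness of lifts $[\gamma]\in p_*\pi_1(E,\tilde x)$ holds iff this lift is a loop, i.e.\ iff $[\gamma_{[0,1]}]=[\gamma]$ equals $\tilde x$ in $E$, i.e.\ iff $[\gamma]=[\gamma*c_x^{-1}]\in\pi$. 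Therefore $p_*\pi_1(E,\tilde x)=\pi$, which completes the proof. $\square$
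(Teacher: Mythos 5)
First, note that the paper does not prove this statement at all: it is quoted from Biss \cite[Theorem 4.3]{Bi} as a preliminary, so the only thing to compare your argument with is Biss's original one. Your construction is exactly his: $E=P(X,x)/\!\!\sim$ with the quotient of the compact-open topology and $p$ the endpoint map. The bookkeeping at the two ends of your argument (identification of the fibers with coset spaces, and the computation $p_*\pi_1(E,\tilde x)=\pi$ once unique path lifting is in hand) is standard and essentially correct, modulo one point-set caveat: the subspace topology that $p^{-1}(x)$ inherits from $E$ is a priori \emph{coarser} than the quotient topology on $\pi_1^{qtop}(X,x)/\pi$ (the identity map is continuous from the latter to the former), and a coarser topology admits \emph{more} continuous maps into it, not fewer. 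So ``$\pi_1^{qtop}(X)/\pi$ has no nonconstant paths'' does not transfer to the fiber for free; you need the restriction of $Q$ to the saturated set $\Omega(X,x)=\mathrm{ev}_1^{-1}(x)$ to be a quotient map onto its image, which holds when that set is closed (e.g. $X$ is $T_1$) but does not follow from saturation alone, as your parenthetical suggests.

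The genuine gap is the one you flag yourself: the proof that $p$ is a fibration. This is not a deferrable technicality --- it is the entire content of the theorem --- and the strategy you sketch does not close it. Pushing the lifting function of $\mathrm{ev}_1$ forward along $Q$ requires the induced map $P(X,x)\times_X X^I\lo E\times_X X^I$ to be a quotient map, and a pullback or product of a quotient map with an identity is not a quotient map in general ($X^I$ need not be locally compact), so continuity of $\Lambda$ cannot be deduced by ``pushing forward along $Q$.'' More tellingly, your fibration argument never uses the hypothesis that $\pi_1^{qtop}(X)/\pi$ has no nonconstant paths: if the argument were complete as outlined, it would show that the path-space quotient is a Hurewicz fibration for \emph{every} subgroup $\pi$, which is precisely the kind of ``a quotient of a fibration is a fibration'' claim that fails in general. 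A correct proof must feed the hypothesis into the lifting argument itself, not only into the analysis of the fibers; this is also the delicate point in Biss's own argument, whose treatment of exactly this step has been questioned in the subsequent literature (cf.\ the discussion in \cite{Bra}). As written, the proposal establishes everything except the one claim that needed proving.
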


For a pointed map $f:(X,x_0)\longrightarrow (Y,y_0)$, the mapping fiber is the pointed space $Mf=\{(x,\omega)\in X\times Y^I: \omega(0)=y_0 \ \ \text{and} \ \ \omega(1)=f(x) \}$, the base point of this space is $(x_0,\omega_0)$, where $\omega_0$ is the constant path at $y_0$. Also, there is an injection $k:\Omega (Y,y_0)\longrightarrow Mf$ given by $k(\omega)=(x_0, \omega)$ and an obvious map $\lambda:X\longrightarrow Mf$ by $\lambda(x)=(x,\omega_0)$ (see \cite{Ro}).
\section{Main Results}

In this section, we obtain some results in rigid fibrations and then we intend to obtain a relation between arbitrary topological spaces and its rigid fibrations. Then we obtain a commutative diagram of homotopy groups and quasitopological homotopy groups and deduce some results in this field.
\begin{theorem}
Let  $p :E\longrightarrow X$ be a rigid  fibration. If $A$ is any path component  of $E$, then $p|_A:A\longrightarrow p(A)$ is a rigid  fibration.
\end{theorem}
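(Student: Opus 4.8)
The plan is to check directly the two defining conditions of a rigid fibration for the corestricted map $p|_A : A \longrightarrow p(A)$: that it is a (Hurewicz) fibration, and that each of its fibers has no nonconstant paths. The second condition is immediate: for $y \in p(A)$ the fiber $(p|_A)^{-1}(y)$ is exactly $p^{-1}(y)\cap A$ with the subspace topology, and a path in this set is in particular a path in the fiber $p^{-1}(y)$ of the rigid fibration $p$, hence constant. So the real content is the homotopy lifting property of $p|_A$.

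To prove that, I would begin with an arbitrary lifting problem for $p|_A$: a space $Z$, a map $h:Z\longrightarrow A$, and a homotopy $G:Z\times I\longrightarrow p(A)$ with $G(z,0)=p(h(z))$ for all $z\in Z$. Composing with the inclusions $A\hookrightarrow E$ and $p(A)\hookrightarrow X$ turns this into a lifting problem for the fibration $p:E\longrightarrow X$, which consequently has a solution $\widetilde H:Z\times I\longrightarrow E$ with $\widetilde H(z,0)=h(z)$ and $p\circ\widetilde H=G$.

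The one step that genuinely needs an argument is that $\widetilde H$ takes all of its values in $A$. For $(z,t)\in Z\times I$, the assignment $s\mapsto\widetilde H(z,st)$ is a path in $E$ from $\widetilde H(z,0)=h(z)\in A$ to $\widetilde H(z,t)$; since $A$ is a path component of $E$, this forces $\widetilde H(z,t)\in A$. Hence $\widetilde H$ corestricts to a map $H:Z\times I\longrightarrow A$ which is still continuous (a continuous map into $E$ whose image lies in the subset $A$ is continuous into the subspace $A$), and $H$ solves the original problem since $p|_A\circ H=G$ and $H(z,0)=h(z)$. Combined with the fiber computation, this shows $p|_A$ is a rigid fibration.

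I do not expect a serious obstacle; the argument is a routine ``pass to a path component'' manipulation, the only care being the corestriction continuity and the observation that the lift supplied by $p$ automatically stays in $A$. It may also be worth recording en route that $p(A)$ is actually a path component of $X$: it is path-connected as a continuous image of the path-connected set $A$, and by the unique path lifting property of the rigid fibration $p$ (Spanier's criterion, quoted above) every path in $X$ starting at a point of $p(A)$ lifts to a path in $E$ starting in $A$, which stays in $A$, so its endpoint again lies in $p(A)$.
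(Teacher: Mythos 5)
Your proof is correct and matches the paper's, which simply cites Spanier's Lemma 2.3.1 together with the definition of a rigid fibration; your argument is in effect a self-contained proof of that cited lemma. The two points you supply --- that a lift of a homotopy starting in the path component $A$ must remain in $A$ by maximality of path components, and that the fibers of $p|_A$ are subspaces of the fibers of $p$ and hence inherit the no-nonconstant-paths property --- are exactly what the citation hides.
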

\begin{proof}
It follows from \cite[Lemma 2.3.1]{Sp} and Definition \ref{fib}.
\end{proof}
\begin{theorem}
Let  $p :E\longrightarrow X$ be a map. If $E$ is locally path connected, then $p$ is a rigid  fibration if and only if for each path component $A$ of $E$, $p(A)$ is a path component  of $X$ and  $p|_A:A\longrightarrow p(A)$ is a rigid  fibration.
\end{theorem}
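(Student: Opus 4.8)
The plan is to prove the two implications separately, using the previous theorem (restriction to a path component of a rigid fibration is again a rigid fibration) for the forward direction and gluing the local data for the converse.

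For the forward direction, suppose $p:E\longrightarrow X$ is a rigid fibration and let $A$ be a path component of $E$. By the previous theorem, $p|_A:A\longrightarrow p(A)$ is a rigid fibration, so it only remains to check that $p(A)$ is a path component of $X$. Since $A$ is path connected and $p$ is continuous, $p(A)$ is contained in some path component $C$ of $X$; I must show $p(A)=C$. Here is where local path connectedness of $E$ enters: being locally path connected, $E$ is the topological disjoint union of its path components, each of which is open. A fibration is surjective on each path component of the base that it meets (one can lift a path in $X$ starting at a point of the image), so $p(A)$ is open in $C$; and $p(A)=p(E)\cap C$ minus the images of the other (open) path components, hence $p(A)$ is also closed in $C$. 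As $C$ is connected and $p(A)\neq\varnothing$, we get $p(A)=C$.

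For the converse, assume that for every path component $A$ of $E$ the set $p(A)$ is a path component of $X$ and $p|_A:A\longrightarrow p(A)$ is a rigid fibration. Since $E$ is locally path connected, write $E=\coprod_{\alpha}A_\alpha$ as the disjoint union of its (open) path components, and correspondingly $X=\coprod_{\beta}C_\beta$ as the disjoint union of its path components, with each $C_\beta=p(A_\alpha)$ for the $A_\alpha$ mapping onto it. To verify the homotopy lifting property for $p$ against an arbitrary space $W$: given a homotopy $G:W\times I\longrightarrow X$ and a lift $g_0:W\times\{0\}\longrightarrow E$ of $G|_{W\times\{0\}}$, decompose $W=\coprod_k W_k$ according to which path component $A_{\alpha(k)}$ the image $g_0(W_k\times\{0\})$ lands in; then $G(W_k\times I)$ lies in the single path component $p(A_{\alpha(k)})$ because $W_k\times I$ is connected and $G$ agrees on $W_k\times\{0\}$ with $p\circ g_0$. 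Apply the homotopy lifting property of the rigid fibration $p|_{A_{\alpha(k)}}$ on each piece and assemble the lifts; this is well-defined and continuous since the $W_k$ are open and closed in $W$. Thus $p$ is a fibration, and its fibers, being fibers of the various $p|_{A_\alpha}$, have no nonconstant paths, so $p$ is a rigid fibration by Definition \ref{fib}.

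The main obstacle is the topological bookkeeping in the converse: one must be careful that the decomposition $W=\coprod_k W_k$ is genuinely into open-closed pieces so that the separately constructed lifts glue to a continuous map, and that every path component of $X$ hit by $G$ is actually of the form $p(A_\alpha)$ so that a lift exists at all. Both points follow from local path connectedness of $E$ together with the hypothesis that each $p(A_\alpha)$ is a full path component of $X$; once these are in place the gluing is routine. A minor subtlety in the forward direction is justifying that a fibration is surjective onto each path component of the base it meets, but this is immediate from path lifting.
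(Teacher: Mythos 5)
Your proposal is correct in substance but takes a genuinely different route from the paper: the paper's entire proof is a one-line citation of Spanier's Theorem 2.3.2 (the same decomposition statement for fibrations with unique path lifting) together with Definition \ref{fib}, whereas you reprove that theorem from scratch. What your version buys is a self-contained argument; what it costs is that you must do the gluing bookkeeping yourself, and two of your justifications need repair. First, in the forward direction the open-and-closed argument inside $C$ is both unnecessary and, as written, incorrect: the identity ``$p(A)=p(E)\cap C$ minus the images of the other path components'' fails whenever two path components of $E$ have overlapping images (e.g.\ a trivial two-sheeted covering, where every component maps onto all of $C$). Your own parenthetical remark already gives the clean proof: a path in $X$ starting at a point of $p(A)$ lifts to a path starting in $A$, the lift stays in the path component $A$, so its endpoint lies in $p(A)$; hence $p(A)=C$ directly, and no local path connectedness is used here --- that hypothesis is genuinely needed only in the converse, where it makes the components $A_\alpha$ open in $E$ and hence the sets $W_k=g_0^{-1}(A_{\alpha(k)})$ open and closed in $W$. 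Second, in the converse the claim that $G(W_k\times I)\subseteq p(A_{\alpha(k)})$ ``because $W_k\times I$ is connected'' is not right: $W_k$ need not be connected, and connectedness would not suffice anyway since path components of $X$ need not be open. The correct reason is that for each $w\in W_k$ the path $t\mapsto G(w,t)$ begins at $p(g_0(w))\in p(A_{\alpha(k)})$, and a path component, being a maximal path-connected set, absorbs any path meeting it. With these two local fixes your argument is complete (including the observation that any path in a fiber of $p$ lies in a single $A_\alpha$ and is therefore constant), and it amounts to an explicit proof of the Spanier result that the paper simply invokes.
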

\begin{proof}
It follows from \cite[Theorem 2.3.2]{Sp} and Definition \ref{fib}.
\end{proof}

The following theorems imply that if $\mathcal{C}$ is the category of connected locally path connected spaces, then every rigid fibration is a weak $\mathcal{C}$-covering map in the sense of \cite{Bra}.
\begin{theorem}\cite[Lemma 2.2.4]{Sp}\label{le1}
If a map has unique path lifting, it has the unique lifting property for path connected spaces.
\end{theorem}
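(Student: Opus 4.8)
The plan is to reduce the lifting problem over a path connected space to a family of path lifting problems, one for each point of the space, and then invoke the hypothesis of unique path lifting. Recall that the unique lifting property for path connected spaces asserts: whenever $Y$ is path connected, $f : Y \longrightarrow X$ is a map, and $g_1, g_2 : Y \longrightarrow E$ are two lifts of $f$ through $p$ (so $p \circ g_1 = p \circ g_2 = f$) that agree at some point $y_0 \in Y$, then $g_1 = g_2$.

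First I would fix such $Y$, $f$, $g_1$, $g_2$, and $y_0$, and let $y \in Y$ be arbitrary. Using path connectedness of $Y$, choose a path $\alpha : I \longrightarrow Y$ with $\alpha(0) = y_0$ and $\alpha(1) = y$. Then $g_1 \circ \alpha$ and $g_2 \circ \alpha$ are paths in $E$, both starting at the common point $g_1(y_0) = g_2(y_0)$, and they both lie over the single path $f \circ \alpha : I \longrightarrow X$, since $p \circ (g_i \circ \alpha) = (p \circ g_i) \circ \alpha = f \circ \alpha$ for $i = 1, 2$.

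Now the hypothesis that $p$ has unique path lifting applies to the path $f \circ \alpha$ together with the point $g_1(y_0)$ lying in the fiber over its initial point $f(y_0)$: there is exactly one lift of $f \circ \alpha$ beginning at $g_1(y_0)$. Hence $g_1 \circ \alpha = g_2 \circ \alpha$, and evaluating at $1$ gives $g_1(y) = g_2(y)$. Since $y$ was arbitrary, $g_1 = g_2$, which is the desired conclusion.

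The argument involves no genuine obstacle; the only point requiring care is the bookkeeping with base points — one must check that the two candidate lifts of $f \circ \alpha$ really share the same initial value, which is precisely where the hypothesis $g_1(y_0) = g_2(y_0)$ is used, and that continuity of $g_1$ and $g_2$, already assumed since they are maps, is all that is needed to form the composites $g_i \circ \alpha$ and to pass from equality of the composed paths to equality of $g_1$ and $g_2$ as functions on $Y$.
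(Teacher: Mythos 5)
Your proof is correct and is exactly the standard argument behind the cited result (the paper itself only quotes \cite[Lemma 2.2.4]{Sp} without reproducing a proof): connect an arbitrary point to the base point, push the path through both lifts, and apply unique path lifting to the two resulting lifts of $f\circ\alpha$ with common initial point. One cosmetic remark: unique path lifting is purely a uniqueness hypothesis, so rather than asserting ``there is exactly one lift of $f\circ\alpha$,'' it is cleaner to say that the two lifts $g_1\circ\alpha$ and $g_2\circ\alpha$, which you have already exhibited, must coincide --- which is all your argument actually uses.
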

\begin{theorem}\cite[Theorem 2.4.5]{Sp}
Let  $p :(\tilde{X}, \tilde{x_0})\longrightarrow (X, x_0)$ be a fibration with unique path lifting property. Let $Y$ be a connected locally path connected space. A necessary and sufficient condition that a map $f:(Y,y_0)\longrightarrow (X,x_0)$ have a lifting $(Y,y_0)\longrightarrow (\tilde{X}, \tilde{x_0})$ is that in $\pi_1(X,x_0)$
\[f_*\pi_1(Y,y_0)\subseteq p_*\pi(\tilde{X}, \tilde{x_0}).\]
\end{theorem}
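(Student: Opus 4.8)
The final statement to be proved is the classical lifting criterion (Spanier, Theorem 2.4.5): a fibration $p:(\wt X,\wt x_0)\to(X,x_0)$ with unique path lifting, and a connected locally path connected space $Y$, admit a lift of $f:(Y,y_0)\to(X,x_0)$ through $p$ if and only if $f_*\pi_1(Y,y_0)\subseteq p_*\pi_1(\wt X,\wt x_0)$.

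\medskip

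\textbf{Plan of proof.} The necessity direction is immediate: if $\wt f:(Y,y_0)\to(\wt X,\wt x_0)$ satisfies $p\circ\wt f=f$, then applying $\pi_1$ gives $f_*=p_*\circ\wt f_*$, so $f_*\pi_1(Y,y_0)=p_*\wt f_*\pi_1(Y,y_0)\subseteq p_*\pi_1(\wt X,\wt x_0)$. The substance is the sufficiency direction, which I would prove by explicitly constructing $\wt f$ pointwise using unique path lifting. Given $y\in Y$, choose (using path connectedness of $Y$) a path $\alpha$ in $Y$ from $y_0$ to $y$; then $f\circ\alpha$ is a path in $X$ starting at $x_0$, and by unique path lifting there is a unique path $\wt{f\circ\alpha}$ in $\wt X$ starting at $\wt x_0$. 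Define $\wt f(y):=\wt{f\circ\alpha}(1)$.

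\medskip

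\textbf{Key steps.} First I would check that $\wt f(y)$ is well defined, i.e. independent of the chosen path $\alpha$: if $\beta$ is another path from $y_0$ to $y$, then $\alpha*\beta^{-1}$ is a loop at $y_0$, so $f_*[\alpha*\beta^{-1}]\in f_*\pi_1(Y,y_0)\subseteq p_*\pi_1(\wt X,\wt x_0)$; this means $f\circ(\alpha*\beta^{-1})$ lifts to a \emph{loop} at $\wt x_0$, and then unique path lifting (together with the homotopy lifting property of the fibration applied to the path homotopy) forces the lifts of $f\circ\alpha$ and $f\circ\beta$ to have the same endpoint. This is the heart of the argument and the main obstacle — one must carefully combine the fact that $p$ is a fibration (to lift the homotopy between $f\circ\alpha*\beta^{-1}$ and a loop representing an element of $p_*\pi_1(\wt X)$) with unique path lifting (to conclude the endpoints agree). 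Next, $p\circ\wt f=f$ holds by construction since $p\circ\wt{f\circ\alpha}=f\circ\alpha$ and evaluating at $1$ gives $p(\wt f(y))=f(y)$; and $\wt f(y_0)=\wt x_0$ by taking $\alpha$ constant. Finally I would prove continuity of $\wt f$: here is where local path connectedness of $Y$ is used. Around any $y\in Y$, choose a path-connected open neighborhood $V$; for $y'\in V$ represent $y'$ by a path $\alpha*\gamma$ where $\gamma$ runs inside $V$, use the homotopy lifting property of $p$ over a suitable neighborhood of $f(y)$ to see that $\wt f|_V$ factors through a local lift obtained from the fibration structure, hence is continuous on $V$. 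Since continuity is local, $\wt f$ is continuous.

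\medskip

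I expect the well-definedness step to be the technical crux, as it is the only place the hypothesis $f_*\pi_1(Y,y_0)\subseteq p_*\pi_1(\wt X,\wt x_0)$ enters, and it requires chaining the homotopy lifting property with unique path lifting rather than either one alone. Since the statement is quoted verbatim from \cite{Sp}, in the paper itself one may simply cite \cite[Theorem 2.4.5]{Sp}; the sketch above records how the proof goes for completeness.
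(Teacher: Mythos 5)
The paper gives no proof of this statement---it is quoted verbatim from Spanier with the citation \cite[Theorem 2.4.5]{Sp}---and your sketch is precisely the standard argument from that source: necessity by functoriality of $\pi_1$, sufficiency by defining the lift through endpoints of lifted paths, well-definedness from the subgroup hypothesis combined with the homotopy lifting property and unique path lifting, and continuity from local path connectedness of $Y$. Your outline is correct and matches the intended (cited) proof, so there is no discrepancy to report.
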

We are inspired by the results of \cite{Bra} and we obtain the following results.

\begin{definition}
A subgroup $H\leq \pi_1(X,x_0)$ is a  rigid fibration subgroup (for simpricity, RF subgroup) if there is a  rigid fibration $p:(E, e_0)\longrightarrow (X,x_0)$  with $p_*\pi_1(E, e_0)=H$.
\end{definition}
\begin{remark}
Let $H$ be a subgroup of $\pi_1(X)$. If the left coset space $\pi_1^{qtop}(X)/H$ has no nonconstant paths, then $H$ is a RF subgroup of $\pi_1(X)$, by Theorem \ref{b1}. As an example every subgroup of fundamental group of Hawaiian earring, $\pi_1(HE)$, is a RF subgroup. Because   the  left coset space $\pi_1^{qtop}(HE)/H$ has no nonconstant paths, for every subgroup $H$ of $\pi_1(HE)$\cite{Bi}.
\end{remark}
\begin{example}\label{1}
Let $HA$ be the harmonic archipelago space. Any subgroup of $ \pi_1(HA)$ is not a RF subgroup. Indeed,  $\pi_1^{qtop}(HA)$ is indiscrete and therefore for any subgroup $H\leq \pi_1(HA)$, the  left coset space $\pi_1^{qtop}(HA)/H$ is not totally path disconnected.
\end{example}
\begin{theorem}
If $\{H_i\}_{i\in I}$ is any set of RF subgroup of  $\pi_1(X,x_0)$, then $\prod _i H_i$ is a rigid fibration subgroup.
\end{theorem}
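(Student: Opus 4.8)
The plan is to realise $\prod_i H_i$ via the fibre product of rigid fibrations witnessing the individual $H_i$. For each $i\in I$ fix a rigid fibration $p_i:(E_i,e_i)\to(X,x_0)$ with $(p_i)_*\pi_1(E_i,e_i)=H_i$, and set
\[
E=\Bigl\{(y_i)_{i\in I}\in\prod_{i\in I}E_i \ :\ p_i(y_i)=p_j(y_j)\ \text{for all}\ i,j\in I\Bigr\},
\]
with the subspace topology from $\prod_i E_i$, with $p:E\to X$ the well-defined map $(y_i)_i\mapsto p_i(y_i)$, and with basepoint $\tilde e_0=(e_i)_{i\in I}$. I would show that $p:(E,\tilde e_0)\to(X,x_0)$ is a rigid fibration satisfying $p_*\pi_1(E,\tilde e_0)=\prod_i H_i$, which is exactly the asserted conclusion.

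First I would check that $p$ is a rigid fibration. It is a fibration, being the pullback along the diagonal $X\hookrightarrow X^{I}$ of the product map $\prod_i p_i:\prod_i E_i\to X^{I}$, and Hurewicz fibrations are stable under arbitrary products and under pullback. Its fibre over a point $x$ is $p^{-1}(x)=\prod_i p_i^{-1}(x)$, and a path into a product space is constant precisely when each of its coordinate paths is constant; since each $p_i^{-1}(x)$ has no nonconstant paths, neither does $p^{-1}(x)$. Hence $p$ is a rigid fibration. (Should a path-connected total space be preferred, the first theorem of this section lets one replace $E$ by the path component $\tilde E$ of $\tilde e_0$, still a rigid fibration, without altering $p_*\pi_1$ at $\tilde e_0$.)

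Next I would identify $p_*\pi_1(E,\tilde e_0)$. For one inclusion, composing a loop in $E$ at $\tilde e_0$ with the $i$-th coordinate projection $E\to E_i$ gives a loop in $E_i$ at $e_i$, so its class in $\pi_1(X,x_0)$ lies in $(p_i)_*\pi_1(E_i,e_i)=H_i$; since this holds for every $i$, the class lies in $\prod_i H_i$. For the reverse inclusion, let $[\gamma]\in\prod_i H_i$. For each $i$, because $[\gamma]\in(p_i)_*\pi_1(E_i,e_i)$, the standard monodromy argument for a fibration with unique path lifting shows that the unique lift $\tilde\gamma_i$ of $\gamma$ into $E_i$ from $e_i$ is a loop at $e_i$. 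Then $t\mapsto(\tilde\gamma_i(t))_{i\in I}$ is a continuous loop in $E$ at $\tilde e_0$ lying over $\gamma$, so $[\gamma]\in p_*\pi_1(E,\tilde e_0)$. Thus $p_*\pi_1(E,\tilde e_0)=\prod_i H_i$, and $\prod_i H_i$ is an RF subgroup.

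I expect the principal difficulty to be the careful treatment of the fibre product for infinite $I$: confirming that an arbitrary product of Hurewicz fibrations is again a fibration (so that the pullback $E\to X$ is one), and that the fibres of $p$, being arbitrary products of spaces without nonconstant paths, themselves admit no nonconstant paths. Once this is settled, the two lifting arguments in the last paragraph are routine, each resting only on the unique path lifting property of the $p_i$ --- equivalently, on the defining hypothesis that every fibre of a rigid fibration has no nonconstant paths.
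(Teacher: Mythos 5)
Your argument is internally sound, but it takes a genuinely different route from the paper's, and the two routes in fact establish two different readings of the (ambiguous) statement. The paper forms the external product $\prod_i p_i:\prod_i E_i\longrightarrow \prod_i X$ and cites Spanier's theorem that a product of fibrations with unique path lifting is again one; this realizes the direct product $\prod_i H_i$ as a subgroup of $\pi_1(\prod_i X,(x_0))\cong\prod_i\pi_1(X,x_0)$ --- note that the resulting rigid fibration sits over $\prod_i X$, not over $X$, so the paper's proof produces an RF subgroup of a different fundamental group from the one the $H_i$ live in. Your fibre product $E=\{(y_i):p_i(y_i)=p_j(y_j)\}$ is precisely the pullback of the paper's construction along the diagonal $X\longrightarrow\prod_i X$, and it does stay over $X$; but what it realizes is the diagonal preimage of $\prod_i H_i$, namely the intersection $\bigcap_i H_i$. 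Your own identification argument (``the class lies in $H_i$ for every $i$, hence in $\prod_i H_i$,'' and conversely) is literally a computation of $\bigcap_i H_i$: a single element of $\pi_1(X,x_0)$ cannot belong to an external direct product except via this diagonal identification. So you have correctly proved that $\bigcap_i H_i$ is an RF subgroup of $\pi_1(X,x_0)$ --- arguably the more natural statement, and the one matching the analogous closure-under-intersection result for generalized coverings in Brazas's framework --- whereas the paper proves that $\prod_i H_i$ is an RF subgroup of $\pi_1(\prod_i X)$. All the individual ingredients you use (stability of fibrations under products and pullbacks, the fact that a product of spaces with no nonconstant paths has no nonconstant paths, and the monodromy computation of $p_*\pi_1$ via unique path lifting) are correct; the one thing to fix is to state explicitly that the subgroup your $E$ realizes is $\bigcap_i H_i$ rather than writing $\prod_i H_i$ for it.
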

\begin{proof}
If $H_i$ is a RF subgroup of  $\pi_1(X,x_0)$, then there is a  rigid fibration $p_i:(E_i, e_i)\longrightarrow (X,x_0)$  with $p_*\pi_1(E_i, e_i)=H_i$, for all $i\in I$. By putting $(E,e)=(\prod_i E_i, (e_i))$, the product $p=\prod p_i:(E,e)\longrightarrow (\prod_i X , x_0)$ is a rigid  fibration by \cite[Theorem 2.2.7]{Sp} and
\begin{align*}
p_*\pi_1(E, e) &= p_*\pi_1(\prod_i E_i, (e_i))\ \  \\
&= \prod_i  p_{i_*}\pi_1( E_i, e_i)\\
&=\prod_i  H_i.\\
\end{align*}
Thus $\prod _i H_i$ is a RF subgroup.
\end{proof}

For a given space $X$, let $RF(X)$ denote the category of rigid fibrations over
$X$ which is the category whose objects are rigid fibrations $p :E\longrightarrow X$ and morphisms
are commutative triangles of the form
\begin{center}
\includegraphics[height=2cm,width=3cm]{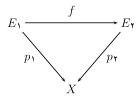}
\end{center}

 Let $DCov(X)$ denote the category of
disk-coverings over $X$  and $GSet$ denote the category of $G$-Sets
and $G$-equivariant functions. One can see that for a topological space $X$, $RF(X)\subseteq  DCov(X)$.
Recall that the functor $\mu: DCov(X)\longrightarrow \pi_1(X,x_0)Set$  was defined as follows: On objects, $\mu$ is defined as the fiber $\mu(p)=p^{-1}(x_0)$ . If $q : E_0 \longrightarrow X$ is a disk-covering and $f : E \longrightarrow E_0$ is a map such that $q\circ f = p$, then $\mu(f)$ is the restriction of f to a $\pi_1(X,x_0)$-equivariant function $p^{-1}(x_0)\longrightarrow q^{-1}(x_0)$. $\mu$ is a faithful functor \cite[Lemma 2.5]{Bra} and so the restriction on $RF(X)$ is also faithful. With the same argument of \cite[Theorem 2.11]{Bra}, we have the following theorem.
\begin{theorem}\label{2}
The functor $\mu:RF(X)\longrightarrow \pi_1(X,x_0)Set$ is fully faithful.
\end{theorem}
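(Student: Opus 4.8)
The plan is to show that $\mu$ is full, since faithfulness has already been observed from the corresponding fact about $DCov(X)$ and the inclusion $RF(X)\subseteq DCov(X)$. So let $p:(E,e_0)\longrightarrow(X,x_0)$ and $q:(E_0,e_0')\longrightarrow(X,x_0)$ be two rigid fibrations, and suppose $\phi:p^{-1}(x_0)\longrightarrow q^{-1}(x_0)$ is a $\pi_1(X,x_0)$-equivariant function. I must produce a morphism $f:E\longrightarrow E_0$ in $RF(X)$ (i.e.\ a continuous map with $q\circ f=p$) such that $\mu(f)=\phi$, and show it is unique. Since $\mu$ restricted to $RF(X)$ is already known to be faithful, uniqueness is automatic once existence is established, so the whole content is the construction of $f$.

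First I would reduce to the path-connected case: replace $E$ by a path component $A$ and $E_0$ by the path component of $E_0$ containing the point $\phi(A\cap p^{-1}(x_0))$ determined by $\phi$ on the relevant fiber points, using that the restriction of a rigid fibration to a path component is again a rigid fibration (the first theorem of the Main Results section) and that a $\pi_1(X,x_0)$-set map carries orbits to orbits. Concretely, for a point $e$ in a path component $A$ of $E$, choose the fiber point $a_0 = e_0$ if $e_0\in A$, and in general pick a base point in each path component; then the equivariance of $\phi$ pins down which path component of $E_0$ the image of $A$ must land in, and $\phi$ restricted to the fiber of $A$ tells us the base-point correspondence. This is the routine bookkeeping step.

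The heart of the argument is then an application of the lifting criterion. On a path component $A$ of $E$ with base point $a$ and with $q|_B:B\longrightarrow q(B)$ the target rigid fibration on the corresponding path component $B$ of $E_0$ with base point $b=\phi(a)$, I want to lift the map $p|_A:A\longrightarrow X$ through $q$. Rigid fibrations have unique path lifting, so by Theorem~\ref{le1} they have the unique lifting property for path-connected spaces; hence if $A$ is locally path connected the lifting criterion (Spanier, Theorem 2.4.5 as quoted) says a lift exists iff $(p|_A)_*\pi_1(A,a)\subseteq (q|_B)_*\pi_1(B,b)$ inside $\pi_1(X,x_0)$. Equivariance of $\phi$ is exactly what guarantees this subgroup inclusion: the stabilizer of $a$ in $\pi_1(X,x_0)$ (which is $(p|_A)_*\pi_1(A,a)$ after conjugation) is carried into the stabilizer of $\phi(a)=b$ (which is $(q|_B)_*\pi_1(B,b)$), because $\gamma\cdot a = a$ forces $\gamma\cdot b = \gamma\cdot\phi(a) = \phi(\gamma\cdot a) = \phi(a) = b$. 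The unique lift $f_A:A\longrightarrow B$ then satisfies $q\circ f_A = p|_A$, and by the construction of the lift (transporting along paths and reading off endpoints) one checks $f_A$ agrees with $\phi$ on $A\cap p^{-1}(x_0)$. Assembling the $f_A$ over all path components gives the desired continuous $f:E\longrightarrow E_0$ with $q\circ f=p$ and $\mu(f)=\phi$.

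The main obstacle I anticipate is the local path connectedness hypothesis required by the Spanier lifting criterion: $E$ (equivalently its path components) need not be locally path connected in general, so I would either need to invoke a version of the lifting theorem that replaces local path connectedness with a semilocal condition, or argue directly — constructing $f$ pointwise by path transport, using unique path lifting to see it is well defined and the fibration property of $q$ to see it is continuous, mimicking the proof of \cite[Theorem 2.11]{Bra} which the paper already cites as the template. A secondary subtlety is matching $\mu(f)=\phi$ on the nose (not just up to the equivariant structure), which follows because $\phi$ is determined by its value at one point in each orbit together with equivariance, and the transport construction of $f$ respects the $\pi_1(X,x_0)$-action by the very definition of the action on fibers via path lifting.
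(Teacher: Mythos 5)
Your proposal takes essentially the same route as the paper: the paper's entire proof consists of the observation that $\mu$ is faithful on $DCov(X)\supseteq RF(X)$ together with the assertion that fullness follows ``with the same argument of \cite[Theorem 2.11]{Bra}'', which is precisely the unique-lifting/stabilizer argument you spell out (equivariance of $\phi$ forces $p_*\pi_1(A,a)\subseteq q_*\pi_1(B,\phi(a))$, and the lifting criterion for fibrations with unique path lifting then produces $f$). The local-path-connectedness issue you flag is genuine --- Spanier's lifting criterion requires the \emph{domain} $E$ to be connected and locally path connected, which the definition of a rigid fibration does not guarantee, and your fallback claim that continuity of the pointwise path-transport construction follows from ``the fibration property of $q$'' is exactly the unproved crux --- but this gap is equally present in the paper's one-line proof, which silently imports the hypothesis that total spaces lie in $\mathcal{C}$ from Brazas's setting; so relative to the paper your attempt is at least as complete and more explicit.
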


The functor $\mu$ in Theorem \ref{2} is not necessarily an equivalence of categories. Because subgroups $H\leq \pi_1(X,x_0)$ exist which are not RF subgroups(see Example \ref{1}).

Every morphism in $RF(X)$  is itself a rigid fibration \cite[lemma 4.5]{Bi}. By this fact and \cite[Theorem 2.2.6]{Sp} we have the following result.
\begin{proposition}\label{4}
Suppose  $p :E\longrightarrow X$ and  $q :X\longrightarrow Y$ are maps. Then \\
$(i)$  If $p$ and $q$ are rigid fibrations, then so is $q\circ p$,\\
$(ii)$ If $q$ and $q\circ p$ are  rigid fibrations, then so is $p$.
\end{proposition}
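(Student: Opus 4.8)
The plan is to reduce Proposition \ref{4} to the two cited results. For part $(i)$, recall that a rigid fibration is by definition (using the simplification noted after Definition \ref{fib}) a fibration all of whose fibers have no nonconstant paths. So I would first invoke \cite[Theorem 2.2.6]{Sp}, which states that a composite of two fibrations is again a fibration; thus $q\circ p : E \longrightarrow Y$ is a fibration. It remains to check the fibers: for $y \in Y$, the fiber $(q\circ p)^{-1}(y) = p^{-1}(q^{-1}(y))$ is the union, over $x \in q^{-1}(y)$, of the fibers $p^{-1}(x)$, each of which is a disjoint clopen-in-$p^{-1}(q^{-1}(y))$? That is not quite automatic, so instead I would argue more carefully via path lifting: a path $\gamma$ in $(q\circ p)^{-1}(y)$ projects under $p$ to a path in $q^{-1}(y)$, which is constant since $q$ is a rigid fibration; hence $\gamma$ lies in a single fiber $p^{-1}(x)$, which has no nonconstant paths, so $\gamma$ is constant. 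Therefore $q\circ p$ is a rigid fibration. Alternatively, and more in the spirit of the preceding sentence in the paper, one can cite directly that $q\circ p$ is a fibration by \cite[Theorem 2.2.6]{Sp} and that its unique path lifting property follows from that of $p$ and $q$ together with \cite[Theorem 2.2.5]{Sp}.

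For part $(ii)$, suppose $q$ and $q\circ p$ are rigid fibrations and hence (in particular) fibrations with unique path lifting. I would again split into the fibration assertion and the fiber assertion. The fibration part is exactly \cite[Theorem 2.2.6]{Sp} (the same theorem contains the statement that if $q$ and $q\circ p$ are fibrations then so is $p$). For the fiber condition: a fiber $p^{-1}(x)$ over $x \in X$ is contained in the fiber $(q\circ p)^{-1}(q(x))$, since $p(p^{-1}(x)) = \{x\}$ maps into $q^{-1}(q(x))$; more precisely $p^{-1}(x) \subseteq p^{-1}(q^{-1}(q(x))) = (q\circ p)^{-1}(q(x))$. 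As the larger set $(q\circ p)^{-1}(q(x))$ has no nonconstant paths, neither does the subspace $p^{-1}(x)$. Hence $p$ is a rigid fibration.

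The only real subtlety — the step I would flag as the main obstacle — is making sure the fiber-of-a-composite argument in part $(i)$ is airtight, i.e. that a path in $p^{-1}(q^{-1}(y))$ genuinely stays inside one single fiber $p^{-1}(x)$. This is where one must use that $q$ has unique path lifting (equivalently, that $q^{-1}(y)$ has no nonconstant paths, by \cite[Theorem 2.2.5]{Sp}): the $p$-image of such a path is a path in $q^{-1}(y)$, hence constant, so the original path does not leave $p^{-1}(x)$ for the fixed $x$ equal to that constant value. Once this is observed, everything else is a direct appeal to \cite[Theorem 2.2.6]{Sp} and the fiber-subspace remark. I would write the proof in roughly four lines, citing \cite[Theorem 2.2.6]{Sp} for the fibration statements and \cite[Theorem 2.2.5]{Sp} (no nonconstant paths in fibers $\iff$ unique path lifting) to handle the fiber conditions in both parts.
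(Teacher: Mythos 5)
Your proposal is correct and follows essentially the same route as the paper, which simply cites Spanier's Theorem 2.2.6 on composites of fibrations with unique path lifting (together with Biss's Lemma 4.5, that every morphism in $RF(Y)$ is a rigid fibration, for part $(ii)$). Your explicit verifications --- that a path in a fiber of $q\circ p$ projects to a constant path in a fiber of $q$ and hence stays in a single fiber of $p$, and that fibers of $p$ embed in fibers of $q\circ p$ --- correctly fill in the details the paper leaves to those citations.
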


\begin{remark}
Let $g:X\longrightarrow Y$ be a rigid fibrations of $Y$, then there is a functor $\mathcal{G}:RF(X)\longrightarrow RF(Y)$ that is defines identity on morphisms and on objects as follows:
If  $p :E\longrightarrow X$ is a rigid fibration of $X$, we define $\mathcal{G}(p)=g\circ p$.
\end{remark}

The following theorem is one of the main results of this paper.
\begin{theorem}\label{13}
Let $p:E_H\longrightarrow X$ and $q:E_G\longrightarrow Y$ be two rigid fibrations of $X$ and $Y$,respectively  such that $p_*\pi_1(E_H)=H$ and $q_*\pi_1(E_G)=G$.\\
$(i)$ If $g:X\longrightarrow Y$ is a continuous map such that $g_*(H)\leq G$, then there is a map $f:E_H\longrightarrow E_G$ such that $q\circ f= g\circ p$.\\
$(ii)$  If $g:X\longrightarrow Y$ is a rigid fibration such that $g_*(H)\leq G$, then $f:E_H\longrightarrow E_G$ defined in part $(i)$ is a rigid fibration.\\
$(iii)$  If $g:X\longrightarrow Y$ is a continuous map and there is a map $f:E_H\longrightarrow E_G$ such that $q\circ f= g\circ p$, then $g_*(H)\leq G$.
\end{theorem}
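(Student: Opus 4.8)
The plan is to treat the three parts essentially as three applications of the classical lifting criterion (Theorem 2.4.5 of Spanier, quoted above) together with the functorial properties already established in Proposition \ref{4} and Theorem \ref{2}. For part $(i)$, I would first observe that since $p:E_H\longrightarrow X$ and $q:E_G\longrightarrow Y$ are rigid fibrations, they are in particular fibrations with unique path lifting, so $q$ satisfies the hypotheses of the lifting criterion. The map I want to lift through $q$ is the composite $g\circ p:E_H\longrightarrow Y$. To apply the criterion I need $E_H$ to be connected and locally path connected; here one restricts attention to the relevant path component (using the earlier theorems on path components of rigid fibrations) or simply assumes, as is standard in this setting, that the total spaces are connected and locally path connected. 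The essential computation is then
\[
(g\circ p)_*\pi_1(E_H)=g_*\bigl(p_*\pi_1(E_H)\bigr)=g_*(H)\subseteq G=q_*\pi_1(E_G),
\]
which is exactly the hypothesis $g_*(H)\leq G$. Hence Theorem 2.4.5 produces a lift $f:E_H\longrightarrow E_G$ with $q\circ f=g\circ p$, as required.

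For part $(ii)$, assume in addition that $g$ is a rigid fibration. Then $g\circ p$ is a rigid fibration by Proposition \ref{4}$(i)$, since it is a composite of rigid fibrations. But $g\circ p=q\circ f$, so the composite $q\circ f$ is a rigid fibration, and $q$ is a rigid fibration by hypothesis. Proposition \ref{4}$(ii)$, applied to the pair $f$ and $q$ in the roles of $p$ and $q$ there, then yields that $f$ itself is a rigid fibration. This is the quick part: it is a pure diagram-chase through the two statements of Proposition \ref{4}.

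For part $(iii)$, suppose conversely that $f:E_H\longrightarrow E_G$ exists with $q\circ f=g\circ p$. Applying $\pi_1$ and functoriality gives
\[
g_*(H)=g_*\bigl(p_*\pi_1(E_H)\bigr)=(g\circ p)_*\pi_1(E_H)=(q\circ f)_*\pi_1(E_H)=q_*\bigl(f_*\pi_1(E_H)\bigr)\subseteq q_*\pi_1(E_G)=G,
\]
where the inclusion holds simply because $f_*\pi_1(E_H)\subseteq\pi_1(E_G)$. Thus $g_*(H)\leq G$, establishing the converse.

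The only real subtlety — and the step I would flag as the main obstacle — is the connectivity and local path connectedness hypothesis needed to invoke the lifting criterion in part $(i)$; the criterion is stated for connected, locally path connected domains, whereas a rigid fibration total space need not satisfy this a priori. I would handle this either by stating these standing assumptions explicitly (as is implicit throughout the paper's use of Spanier's machinery) or by passing to a path component of $E_H$ via the path-component theorems proved earlier and checking that the basepoint choices can be made compatibly so that the resulting $f$ is defined on all of $E_H$. Parts $(ii)$ and $(iii)$ carry no such difficulty and are immediate from the already-established results.
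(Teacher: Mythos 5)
Your proposal is correct and follows essentially the same route as the paper: part $(i)$ via Spanier's lifting criterion applied to $g\circ p$ through $q$, part $(ii)$ via the two halves of Proposition \ref{4}, and part $(iii)$ by applying $\pi_1$ and functoriality. In fact you are slightly more careful than the paper, which silently assumes the connectedness and local path connectedness of $E_H$ needed for the lifting criterion and contains a typo ($(f\circ p)_*(\pi_1(H))$ for $(g\circ p)_*(\pi_1(E_H))$) that you correctly avoid.
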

\begin{proof}
$(i)$ Consider the map $g\circ p:E_H\longrightarrow Y$. Since $q$ is a rigid fibration, it has lifting property for path connected spaces. Since
\[(f\circ p)_*(\pi_1(H))=g_*(H)\leq G=q_*\pi_1(E_G),\]
 there exist a map $f:E_H\longrightarrow E_G$ such that $q\circ f= g\circ p$.\\
 $(ii)$ Since $g\circ p$ and $q$ are two rigid  fibration of $Y$, $f$ is a rigid  fibration of $E_H$ by Proposition \ref{4}.\\
 $(iii)$ Since $q\circ f= g\circ p$, by applying the functor $\pi_1$,
$q_*\circ f_*= g_*\circ p_*$ and then
\[
g_*(H)= g_*\circ p_*(\pi_1(E_H))=q_*\circ f_*(\pi_1(E_H))\leq q_*(\pi_1(E_G)) =G.\]
\end{proof}
\begin{theorem}
Let $H$, $G$ be two RF subgroups of  $\pi_1(X)$ and $\pi_1(Y)$, respectively. If $g:X\longrightarrow Y$ is a rigid fibration such that $g_*(H)\leq G$, then $g_*(H)$ is a RF subgroup of $\pi_1(Y)$.
\end{theorem}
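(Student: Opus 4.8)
The plan is to produce, directly from the data $H$, $G$, $g$, an explicit rigid fibration over $Y$ whose induced map on fundamental groups has image exactly $g_*(H)$. Fix basepoints $x_0\in X$ and $y_0=g(x_0)\in Y$. Since $H$ is an RF subgroup of $\pi_1(X,x_0)$, by definition there is a rigid fibration $p:(E_H,e_0)\longrightarrow (X,x_0)$ with $p_*\pi_1(E_H,e_0)=H$.

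First I would observe that the composite $g\circ p:(E_H,e_0)\longrightarrow (Y,y_0)$ is itself a rigid fibration: by hypothesis $g$ is a rigid fibration and $p$ is a rigid fibration, so Proposition \ref{4}$(i)$ applies. Alternatively one may route through Theorem \ref{13}: choose a rigid fibration $q:E_G\longrightarrow Y$ realizing $G$, use the hypothesis $g_*(H)\leq G$ together with Theorem \ref{13}$(ii)$ to get a rigid fibration $f:E_H\longrightarrow E_G$ with $q\circ f=g\circ p$, and then conclude $q\circ f$ is a rigid fibration as a composite of the rigid fibrations $f$ and $q$ (Proposition \ref{4}$(i)$). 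Either way $g\circ p$ is a rigid fibration; the hypothesis $g_*(H)\leq G$ is actually only needed for this second route, the first route not requiring $G$ at all.

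Next I would compute the image on fundamental groups. Applying the functor $\pi_1$ to the equality of maps gives $(g\circ p)_*=g_*\circ p_*$, whence $(g\circ p)_*\pi_1(E_H,e_0)=g_*\big(p_*\pi_1(E_H,e_0)\big)=g_*(H)$. Thus $g\circ p$ is a rigid fibration over $Y$ with $(g\circ p)_*\pi_1(E_H,e_0)=g_*(H)$, which is precisely the assertion that $g_*(H)$ is an RF subgroup of $\pi_1(Y,y_0)$.

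The argument is short because the substance is already contained in Proposition \ref{4} and Theorem \ref{13}; the only point requiring care is the bookkeeping of basepoints, i.e. taking $y_0=g(x_0)$ so that $g_*(H)$ and $(g\circ p)_*\pi_1(E_H,e_0)$ literally coincide as subgroups of $\pi_1(Y,y_0)$, and noting that no path-connectedness of the total spaces is needed since everything takes place in the pointed category. Accordingly I do not anticipate a genuine obstacle; the only real choice is whether to invoke the self-contained composition statement Proposition \ref{4}$(i)$ or the more structural Theorem \ref{13}$(ii)$, and I would present the proof via the former for brevity while remarking on the latter.
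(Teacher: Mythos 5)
Your proposal is correct, and your primary route is genuinely shorter than the paper's. The paper realizes both $H$ and $G$ by rigid fibrations $p:E_H\longrightarrow X$ and $q:E_G\longrightarrow Y$, invokes Theorem \ref{13}$(ii)$ to manufacture a rigid fibration $f:E_H\longrightarrow E_G$ with $q\circ f=g\circ p$, and then exhibits $q\circ f$ as the required rigid fibration over $Y$, ending with the same computation $(q\circ f)_*\pi_1(E_H)=(g\circ p)_*\pi_1(E_H)=g_*(H)$ that you perform. That is exactly your ``alternative'' route. Your preferred route cuts out the middleman: since $g$ is assumed to be a rigid fibration, Proposition \ref{4}$(i)$ applied to $p$ and $g$ already shows $g\circ p$ is a rigid fibration over $Y$, and the $\pi_1$-computation finishes the argument. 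This is not merely cosmetic: as you observe, it shows that the subgroup $G$ and the hypothesis $g_*(H)\leq G$ play no role in the conclusion, so the theorem holds in the sharper form ``if $H$ is an RF subgroup of $\pi_1(X)$ and $g:X\longrightarrow Y$ is a rigid fibration, then $g_*(H)$ is an RF subgroup of $\pi_1(Y)$.'' Note also that the paper's detour is not even logically independent of your shortcut, since Theorem \ref{13}$(ii)$ itself obtains $f$ by applying Proposition \ref{4} to the pair $g\circ p$, $q$, which already presupposes that $g\circ p$ is a rigid fibration. Your attention to the basepoint $y_0=g(x_0)$ is the right bookkeeping and matches what the paper leaves implicit.
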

\begin{proof}
By hypothises, there exist two rigid fibrations $p:E_H\longrightarrow X$ and $q:E_G\longrightarrow Y$ of $X$ and $Y$,respectively such that $p_*\pi_1(E_H)=H$ and $q_*\pi_1(E_G)=G$. Using Theorem \ref{13} part $(ii)$, there is a rigid fibration $f:E_H\longrightarrow E_G$ with $q\circ f= g\circ p$. The composition  $q\circ f:E_H\longrightarrow Y$ is a rigid fibration of $Y$ such that
\[ (q\circ f)_*(\pi_1(E_H))= (g\circ p)_*(\pi_1(E_H))=g_*(H).\]
\end{proof}

The following result can be concluded from part (i) of Theorem \ref{13}.
\begin{theorem}\label{ex12}
Let $p:E_H\longrightarrow X$ and $q:E_G\longrightarrow Y$ be two rigid fibrations of $X$ and $Y$ with fibers $F_1$ and $F_2$, respectively such that $p_*\pi_1(E_H)=H$ and $q_*\pi_1(E_G)=G$. Let $g:X\longrightarrow Y$ be a continuous map such that $g_*(H)\leq G$, then

$(i)$ There is a commutative diagram in Set$_*$ with exact rows:
\begin{equation}\label{ex1}
\begin{CD}
\cdots @> >> \pi_{n}(F_1)@> i_* >> \pi_n(E_H)@ > p_* >> \pi_n(X) @> d >> \pi_{n-1}(F_1) @> >> \cdots\\
&      & @VV (f|_{F_{1}})_{*} V@V f_*VV@V g_*VV@V (f|_{F_{1}})_{*}VV\\
\cdots @>  >> \pi_{n}(F_2) @ > j_* >>\pi_n(E_G)@>q_* >> \pi_n(Y)@> d' >> \pi_{n-1}(F_2) @> >> \cdots,
\end{CD}
\end{equation}
where the existence of the map $f:E_H\longrightarrow E_G$ follows from Theorem \ref{13}.

$(ii)$ There is a commutative diagram in Top$_*$ with exact rows:
\begin{equation}\label{ex2}
\begin{CD}
\cdots @> >> \pi_{n}^{qtop}(F_1)@> i_* >> \pi_n^{qtop}(E_H)@ > p_* >> \pi_n^{qtop}(X) @> d >> \pi_{n-1}^{qtop}(F_1) @> >> \cdots\\
&      & @VV (f|_{F_{1}})_{*} V@V f_*VV@V g_*VV@V (f|_{F_{1}})_{*}VV\\
\cdots @>  >> \pi_{n}^{qtop}(F_2) @ > j_* >>\pi_n^{qtop}(E_G)@>q_* >> \pi_n^{qtop}(Y)@> d' >> \pi_{n-1}^{qtop}(F_2) @> >> \cdots.
\end{CD}
\end{equation}
\end{theorem}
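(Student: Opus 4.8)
The plan is to build the vertical maps of the ladder from the comparison map $f\colon E_H\to E_G$ supplied by Theorem \ref{13}(i) and then invoke naturality of the long exact homotopy sequence of a fibration. First I would observe that, since $q\circ f=g\circ p$, the map $f$ carries the fiber $F_1=p^{-1}(x_0)$ into the fiber $F_2=q^{-1}(y_0)$: if $e\in F_1$ then $q(f(e))=g(p(e))=g(x_0)=y_0$. This produces a restriction $f|_{F_1}\colon F_1\to F_2$ and a commutative square of pairs relating $(E_H,F_1)\to X$ with $(E_G,F_2)\to Y$, i.e.\ a morphism of fibration sequences in the sense that $q\circ f=g\circ p$ and $f\circ i = j\circ (f|_{F_1})$, where $i,j$ are the fiber inclusions.

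Next I would recall the standard fact (e.g.\ from \cite{Sp} or \cite{Ro}) that the long exact homotopy sequence of a fibration is natural with respect to such maps of fibrations: applying the functors $\pi_n$ to the commuting squares, and using that the connecting homomorphism $d$ is induced by the path-lifting construction and hence commutes with the maps induced by $(f, f|_{F_1})$ on the relevant mapping-fiber data, one gets exactly the commutative diagram \eqref{ex1} with exact rows, the rows being the ordinary long exact sequences of the rigid fibrations $p$ and $q$. Concretely, the only square needing a separate check is the one involving the connecting maps, $d'\circ g_* = (f|_{F_1})_*\circ d$; this follows because $d$ is defined by lifting a representative loop $\gamma$ in $X$ to a path in $E_H$ and taking the endpoint in $F_1$, and the compatibility $q\circ f=g\circ p$ together with uniqueness of path lifting in the rigid fibration $q$ forces $f$ of such a lift to be the $q$-lift of $g\circ\gamma$, so its endpoint is $f|_{F_1}$ of the original endpoint.

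For part (ii) I would redo the same argument one categorical level up, in $\mathrm{Top}_*$. The point is that all the structure maps $i_*, p_*, j_*, q_*, f_*, g_*, (f|_{F_1})_*$ and the connecting maps $d,d'$ are induced by continuous maps on loop spaces (with the compact–open topology) and hence descend to continuous homomorphisms of the quasitopological homotopy groups $\pi_n^{qtop}$, by the defining quotient topology via $q\colon\Omega^n\to\pi_n$. Exactness of each row as a sequence of (quasitopological) groups is inherited from the set-level/group-level exactness of part (i) since exactness is a statement about kernels and images of the underlying homomorphisms; the continuity is the new content and it is routine once one notes that $\pi_n^{qtop}$ is a functor from $\mathrm{Top}_*$ to quasitopological groups and that the connecting map $\Omega^n X\to \Omega^{n-1}F_1$ arising from path lifting is continuous for a rigid fibration. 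Commutativity of \eqref{ex2} then follows immediately from commutativity of \eqref{ex1}, since the quasitopological groups have the same underlying groups and maps.

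The main obstacle I anticipate is the connecting-homomorphism square, in both the set and the topological versions: one must be careful that the boundary map $d$ for a rigid fibration is genuinely well defined and natural, which relies on unique path lifting (guaranteed here because the fibers $F_1,F_2$ have no nonconstant paths), and in the topological case one must verify that the lift-and-evaluate map is continuous so that $d$ is a morphism in $\mathrm{Top}_*$. Everything else — the existence of $f$, the inclusion $f(F_1)\subseteq F_2$, naturality of $p_*,i_*$ — is formal given Theorem \ref{13} and the functoriality of $\pi_n$ and $\pi_n^{qtop}$.
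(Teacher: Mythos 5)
Your proposal is correct in substance and reaches the same diagram, but it handles the one nontrivial step --- the square involving the connecting maps --- by a genuinely different route than the paper. You define $d$ by the classical lift-a-loop-and-evaluate construction and argue naturality from unique path lifting in $q$; the paper instead works in Rotman's mapping-fiber formulation: it exhibits a commutative diagram of \emph{spaces}
\[
\begin{CD}
\Omega X @>k>> Mp @<\lambda<< F_1\\
@VVg_{\sharp}V @VVlV @VVf|_{F_1}V\\
\Omega Y @>k'>> Mq @<\lambda'<< F_2
\end{CD}
\]
and then simply applies the functor $\pi_n$ (resp.\ $\pi_n^{qtop}$). The payoff of the paper's choice shows up precisely in part (ii): since $d$ is there induced by honest continuous maps (composed with inverting the isomorphism coming from the homotopy equivalence $\lambda$), continuity of the boundary maps and commutativity in $\mathrm{Top}_*$ come for free from functoriality of $\pi_n^{qtop}$, whereas your route leaves you with the extra obligation --- which you flag but do not discharge --- of checking that the lift-and-evaluate map $\Omega^n X\to\Omega^{n-1}F_1$ is continuous (this needs a continuous lifting function for the Hurewicz fibration, not just unique path lifting). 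Two smaller cautions: your explicit uniqueness argument for $d'\circ g_*=(f|_{F_1})_*\circ d$ is written for paths, i.e.\ the $n=1$ case; for higher $n$ unique \emph{path} lifting does not give literal uniqueness of lifts of cubes, so you must either phrase the check on the $\Omega^{n-1}$ level or invoke the standard naturality of the fibration sequence outright. Neither point is a fatal gap, but the mapping-fiber argument avoids both.
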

\begin{proof}
$(i)$ Since $g_*(H)\leq G$, there is a map  $f:E_H\longrightarrow E_G$ such that $q\circ f= g\circ p$, by Theorem \ref{13}. Since $f\circ i=j\circ f|_{F_1}$,  $q\circ f= g\circ p$ and $\pi_n$ is a functor, the first two square commute. To see commutativity of the last square, consider the following diagram:
\begin{equation}\label{di1}
\begin{CD}
\Omega X @ > k >> Mp @ <  \lambda << F_1\\
 @VV g_{\sharp} V@V l VV@V f|_{F_{1}} VV\\
\Omega Y @ > k' >> Mq @ <  \lambda' << F_2,
\end{CD}
\end{equation}
where the maps $g_{\sharp}$ and $l$ are induced maps. It is easy to see that  Diagram (\ref{di1}) is commutative. Therefore the induced diagram by the functor $\pi_n$ i.e. the last square in Diagram (\ref{ex2}), is commutative.\\
$(ii)$ It follows from a similar argument of part $(i)$ by applying the functor $\pi_n^{qtop}$.

\end{proof}
\begin{remark}
Let  $H < \pi_1(X)$, $G < \pi_1(Y)$ and the left coset spaces $\pi_1^{qtop}(X)/H$ and $\pi_1^{qtop}(Y)/G$ has no nonconstant paths. By Theorem \ref{b1}, Diagrams (\ref{ex1}) and (\ref{ex2}) are commutative in Set$_*$ and Top$_*$, respecively, where  $g:X\longrightarrow Y$ is a continuous map with $g_*(H)\leq G$.
\end{remark}

In follow, using this theorem we deduce some results about  homotopy groups and quasitopological homotopy groups.
\begin{lemma}\label{le}\cite[Lemma 6.2]{Ro}
Consider the commutative diagram with exact rows
\begin{equation*}
\begin{CD}
\cdots @> >> A_n@> i_n >> B_n @ > p_n >> C_n @> d_n >> A_{n-1} @> >> \cdots\\
&      & @VV f_n V@V g_n VV@V h_n VV@V f_{n-1} VV\\
\cdots @> >> A'_n@> j_n >> B'_n @ > q_n >> C'_n @> \Delta_n >> A'_{n-1} @> >> \cdots,
\end{CD}
\end{equation*}
in which every third vertical map $h_n$ is an isomorphism. Then there is an exact sequence
\begin{multline*}
\cdots \longrightarrow A_n\longrightarrow B_n\oplus A'_n\longrightarrow B'_n\longrightarrow A_{n-1} \longrightarrow \cdots.
\end{multline*}
\end{lemma}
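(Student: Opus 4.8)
The plan is to apply the Mayer--Vietoris-type argument that produces a long exact sequence from a map of long exact sequences in which every third vertical arrow is an isomorphism; this is a standard diagram-chase, and since the statement is quoted from \cite[Lemma 6.2]{Ro} the cleanest route is simply to reduce to that reference, but here I sketch how the construction goes. First I would define the connecting maps of the new sequence. The map $A_n \to B_n \oplus A'_n$ is $a \mapsto (i_n(a), -f_n(a))$ (the signs chosen so that compositions cancel), and the map $B_n \oplus A'_n \to B'_n$ is $(b, a') \mapsto g_n(b) + j_n(a')$. The map $B'_n \to A_{n-1}$ is the subtle one: given $b' \in B'_n$, push it to $q_n(b') \in C'_n$, pull back along the isomorphism $h_n$ to get the unique $c \in C_n$ with $h_n(c) = q_n(b')$, and then apply $d_n$ to land in $A_{n-1}$. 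One must check this is well defined, which it is because $h_n$ is bijective.

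Next I would verify exactness at each of the three types of spots by a routine chase, using at each step (a) exactness of the two given rows, (b) commutativity of the squares, and (c) bijectivity of the $h_n$. For instance, at $B_n \oplus A'_n$: if $g_n(b) + j_n(a') = 0$ then $q_n g_n(b) = -q_n j_n(a') = 0$, so $h_n p_n(b) = q_n g_n(b) = 0$, hence $p_n(b) = 0$ by injectivity of $h_n$, so $b = i_n(a)$ for some $a$; then $j_n(f_n(a) + a') = g_n i_n(a) + j_n(a') = g_n(b) + j_n(a') = 0$, so $f_n(a) + a' = j_n$-preimage considerations force $a' = -f_n(a)$ up to the image of $B'_{n+1} \to A'_n$, which is exactly what lands $(b,a')$ in the image of the preceding map. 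The spots at $A_{n-1}$ and at $B'_n$ are handled similarly, each time exploiting that $h_n$ is an isomorphism to transport kernels and images between the two rows.

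The main obstacle is purely bookkeeping: getting the signs consistent so that every composite in the new sequence is zero, and making sure the well-definedness of the connecting map $B'_n \to A_{n-1}$ is argued before it is used in the exactness chases. Once the maps are pinned down correctly, exactness follows mechanically from the two hypotheses. Since none of this is new, in the paper I would simply write that the sequence and its exactness are given by \cite[Lemma 6.2]{Ro}, applied with $h_n = g_*$ (which is an isomorphism for $n \geq 2$ when $g$ is a rigid fibration, and in the relevant range), and leave the diagram chase to the reference.
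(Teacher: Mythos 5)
Your proposal is correct and matches the paper exactly: the paper gives no proof of this lemma beyond the citation to \cite[Lemma 6.2]{Ro} (the Barratt--Whitehead argument), which is precisely what you fall back on, and the diagram chase you sketch — with the connecting map $b' \mapsto d_n h_n^{-1} q_n(b')$ — is the standard proof of that cited result. The only blemish is a slip of notation in your chase at $B_n \oplus A'_n$ (the relevant image is that of $\Delta_{n+1}\colon C'_{n+1}\to A'_n$, transported through the surjectivity of $h_{n+1}$, not of a map $B'_{n+1}\to A'_n$), which does not affect the argument.
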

\begin{corollary}\label{hom}
Let $p:E_H\longrightarrow X$ and $q:E_G\longrightarrow Y$ be two rigid fibrations of $X$ and $Y$ with fibers $F_1$ and $F_2$, respectively such that $p_*\pi_1(E_H)=H$ and $q_*\pi_1(E_G)=G$. If $X$ and $Y$ are homotopic, then there is a long exact sequence of homotopy groups.
\begin{multline}\label{seq}
\cdots \longrightarrow\pi_n(F_1)\longrightarrow \pi_n(E_H)\oplus \pi_n(F_2)\longrightarrow \pi_{n}(E_G)\longrightarrow \pi_{n-1}(F_1)\longrightarrow \cdots.
\end{multline}
In particular,

$(i)$ If $X$ and $Y$ are homotopic and $\pi_n(F_1)=0$, for all $n\geq 0$, then $\pi_{n}(E_G)\cong \pi_n(E_H)\oplus \pi_n(F_2)$, for all $n\geq 1$.

$(ii)$ If $X$ and $Y$ are homotopic and $E_G$ is contractible, then $\pi_n(F_1)\cong \pi_n(E_H)\oplus \pi_n(F_2)$, for all $n\geq 1$.

$(iii)$ If $X$ and $Y$ are homotopic, $E_H$ is contractible and $\pi_n(F_2)=0$, for all $n\geq 0$, then
$\pi_n(E_G)\cong \pi_{n-1}(F_1)$, for all $n\geq 1$.

\end{corollary}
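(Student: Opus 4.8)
\emph{Proof proposal.} The plan is to feed the commutative diagram (\ref{ex1}) produced by Theorem \ref{ex12} into the algebraic ladder Lemma \ref{le}. First I would record that, $X$ and $Y$ being homotopic, the continuous map $g:X\longrightarrow Y$ occurring in the hypotheses (the one with $g_*(H)\le G$) is to be taken as a homotopy equivalence, so that $g_*:\pi_n(X)\longrightarrow\pi_n(Y)$ is an isomorphism in every degree. Then Theorem \ref{ex12}$(i)$ supplies a map $f:E_H\longrightarrow E_G$ with $q\circ f=g\circ p$ together with a commutative diagram with exact rows (the two long exact homotopy sequences of the fibrations $p$ and $q$, joined by the vertical arrows $(f|_{F_1})_*$, $f_*$, $g_*$) in which every third vertical arrow, namely $g_*$, is an isomorphism.

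Next I would match (\ref{ex1}) against Lemma \ref{le} via $A_n=\pi_n(F_1)$, $B_n=\pi_n(E_H)$, $C_n=\pi_n(X)$, $A'_n=\pi_n(F_2)$, $B'_n=\pi_n(E_G)$, $C'_n=\pi_n(Y)$, with $f_n=(f|_{F_1})_*$, $g_n=f_*$, $h_n=g_*$; the requirement that each $h_n$ be an isomorphism holds by the previous step. The conclusion of Lemma \ref{le} is then exactly the claimed exact sequence (\ref{seq})
\[
\cdots \longrightarrow \pi_n(F_1)\longrightarrow \pi_n(E_H)\oplus\pi_n(F_2)\longrightarrow \pi_n(E_G)\longrightarrow \pi_{n-1}(F_1)\longrightarrow\cdots .
\]

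The three items follow by inspection of (\ref{seq}). For $(i)$, if $\pi_n(F_1)=0$ for all $n\ge 0$, the terms flanking $\pi_n(E_H)\oplus\pi_n(F_2)\longrightarrow\pi_n(E_G)$ vanish, so this map is an isomorphism for $n\ge 1$. For $(ii)$, $E_G$ contractible gives $\pi_n(E_G)=\pi_{n+1}(E_G)=0$, and exactness of the segment $\pi_{n+1}(E_G)\to\pi_n(F_1)\to\pi_n(E_H)\oplus\pi_n(F_2)\to\pi_n(E_G)$ yields $\pi_n(F_1)\cong\pi_n(E_H)\oplus\pi_n(F_2)$. For $(iii)$, $E_H$ contractible together with $\pi_n(F_2)=0$ for all $n\ge 0$ forces $\pi_n(E_H)\oplus\pi_n(F_2)=0$ for every $n$, so the segment $0\to\pi_n(E_G)\to\pi_{n-1}(F_1)\to 0$ of (\ref{seq}) gives $\pi_n(E_G)\cong\pi_{n-1}(F_1)$ for $n\ge 1$.

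The step requiring the most care is the bookkeeping at low degrees: the diagram (\ref{ex1}) lives in $\mathrm{Set}_*$, and $\pi_n$ is an abelian group only for $n\ge 2$ (with $\pi_1$ a possibly nonabelian group and $\pi_0$ a mere pointed set), so the direct-sum formulation of Lemma \ref{le} \cite[Lemma 6.2]{Ro} is to be read in the range where all the maps involved are genuine homomorphisms; in the lowest degrees the corresponding bijectivity statements are obtained by the same pointed-set diagram chase already used to establish Theorem \ref{ex12}. Beyond this, no real obstacle arises: once the ladder is in place with $g_*$ invertible, the corollary is a formal consequence of Lemma \ref{le}.
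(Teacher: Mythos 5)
Your proposal is correct and follows essentially the same route as the paper: since $X$ and $Y$ are homotopic, $g_*$ is an isomorphism in every degree, and feeding Diagram (\ref{ex1}) into Lemma \ref{le} yields the sequence (\ref{seq}), with the three special cases read off by exactness. Your extra care about taking $g$ to be the homotopy equivalence and about the low-degree (non-group) terms only makes explicit what the paper's one-line proof leaves implicit.
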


\begin{proof}
Since $X$ and $Y$ are homotopic, each $g_*$ in Diagram (\ref{ex1}) is an isomorphism, so that Lemma \ref{le} gives the result.
\end{proof}

As an example of the part $(i)$ in above corollary, if $p:E_H\longrightarrow X$ is a covering space, $\pi_n(F_1)=0$, for all $n\geq 1$, then $\pi_{n}(E_G)\cong \pi_n(E_H)\oplus \pi_n(F_2)$, for all $n\geq 2$.
\begin{corollary}\label{five}
Let $p:E_H\longrightarrow X$ and $q:E_G\longrightarrow Y$ be two rigid fibrations of $X$ and $Y$ with fibers $F_1$ and $F_2$, respectively such that $p_*\pi_1(E_H)=H$ and $q_*\pi_1(E_G)=G$. If $X\simeq Y$ and $F_1\simeq F_2$, then for all $n\geq 1$
$$\pi_n(E_G)\cong \pi_n(E_H).$$
\end{corollary}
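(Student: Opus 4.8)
The plan is to feed the two hypotheses into the commutative ladder \eqref{ex1} of Theorem \ref{ex12} and apply the five lemma at each total-space term. First I would invoke Theorem \ref{ex12}$(i)$ for a homotopy equivalence $g\colon X\longrightarrow Y$ (and, if it does not already satisfy $g_*(H)\leq G$, replace it by one that does; for the ladder only the bijectivity of the vertical maps $g_*$ is actually used, which is why the proof of Corollary \ref{hom} is content with the mere hypothesis that $X$ and $Y$ are homotopic). This produces the exact ladder \eqref{ex1} whose middle column is $f_*\colon\pi_n(E_H)\to\pi_n(E_G)$, whose column at $\pi_n(X)$ is $g_*$, and whose fiber columns are $(f|_{F_1})_*$. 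Since $X\simeq Y$, every $g_*\colon\pi_n(X)\to\pi_n(Y)$ is an isomorphism.

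Next I would use that $p$ and $q$ are rigid fibrations: by Definition \ref{fib} the fibers $F_1,F_2$ have no nonconstant paths, so every map of a path-connected space into $F_i$ is constant, whence $\pi_k(F_1)=\pi_k(F_2)=0$ for all $k\geq 1$. Consequently $(f|_{F_1})_*\colon\pi_k(F_1)\to\pi_k(F_2)$ is automatically an isomorphism for every $k\geq 1$. For $n\geq 2$ the four maps flanking $\pi_n(E_H)$ in the ladder — $g_*$ in degrees $n$ and $n+1$, and $(f|_{F_1})_*$ in degrees $n$ and $n-1$ — are then all isomorphisms, so the five lemma gives $f_*\colon\pi_n(E_H)\xrightarrow{\cong}\pi_n(E_G)$. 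This half of the statement uses only $X\simeq Y$; the hypothesis $F_1\simeq F_2$ is not needed when $n\geq 2$.

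The case $n=1$ is where $F_1\simeq F_2$ genuinely enters, and it is the step I expect to be the main obstacle. Running the five lemma at $\pi_1(E_H)$ additionally requires that $(f|_{F_1})_*\colon\pi_0(F_1)\to\pi_0(F_2)$ be injective. Via the exact rows of \eqref{ex1} one identifies $\pi_0(F_1)$ and $\pi_0(F_2)$, as pointed sets, with the coset spaces $\pi_1(X)/H$ and $\pi_1(Y)/G$, and $F_1\simeq F_2$ forces these to have the same cardinality. The delicate point is to see that the \emph{particular} map $f|_{F_1}$ coming from Theorem \ref{13} induces a bijection on path components — equivalently, that $f$ can be chosen so that its restriction to the fibers matches the homotopy equivalence $F_1\simeq F_2$; here it helps that a homotopy equivalence between spaces with no nonconstant paths is in fact a homeomorphism, so there is essentially only one identification to check. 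Granting this, the five lemma yields $f_*\colon\pi_1(E_H)\xrightarrow{\cong}\pi_1(E_G)$, and the corollary holds for all $n\geq 1$.

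Alternatively one could run the whole argument through the long exact sequence \eqref{seq} of Corollary \ref{hom}: since $\pi_k(F_1)\cong\pi_k(F_2)$ for every $k$ (both vanishing for $k\geq 1$), the fiber terms cancel in \eqref{seq} and what remains is $\pi_n(E_H)\cong\pi_n(E_G)$. But turning this cancellation into an honest isomorphism again reduces to the induced map on $\pi_0$ being a bijection, so this route meets precisely the same obstacle.
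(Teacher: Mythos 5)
Your route is the same one the paper takes --- its entire proof reads ``It follows from Five Lemma and Diagram (\ref{ex1})'' --- and your refinements of it are correct and worth having: since the fibers of a rigid fibration admit no nonconstant paths, $\pi_k(F_1)=\pi_k(F_2)=0$ for all $k\geq 1$, so for $n\geq 2$ the conclusion needs only $X\simeq Y$ (indeed it already follows from $p_*$ and $q_*$ being isomorphisms on $\pi_n$ for $n\geq 2$), and the whole content of the corollary is the case $n=1$. You have also correctly isolated the two steps the paper's one-line proof silently assumes: (a) that a homotopy equivalence $g\colon X\to Y$ with $g_*(H)\leq G$ exists at all, without which Diagram (\ref{ex1}) is not available, and (b) that $(f|_{F_1})_*\colon\pi_0(F_1)\to\pi_0(F_2)$ is injective, which under (a) amounts to $g_*^{-1}(G)=H$, i.e.\ $g_*(H)=G$.

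Neither you nor the paper closes these gaps, and they cannot be closed from the stated hypotheses: the parenthetical ``replace it by one that does'' is the step that fails, and with it the corollary itself. Take $X=Y=S^1\vee S^1$ with fundamental group the free group on $a,b$, let $H=\langle a\rangle$ and let $G$ be the commutator subgroup. Both are classical covering-space subgroups, hence RF subgroups; both have countably infinite coset spaces, so $F_1$ and $F_2$ are countably infinite discrete spaces and $F_1\simeq F_2$; and $X=Y$. Yet $\pi_1(E_H)\cong H\cong\mathbb{Z}$ while $\pi_1(E_G)\cong G$ is free of countably infinite rank, so $\pi_1(E_H)\not\cong\pi_1(E_G)$. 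What is actually provable is the statement with the hypothesis ``$X\simeq Y$ and $F_1\simeq F_2$'' replaced by ``some homotopy equivalence $g\colon X\to Y$ satisfies $g_*(H)=G$'': then $\pi_0(F_1)\to\pi_0(F_2)$ is the bijection of coset spaces $\pi_1(X)/H\to\pi_1(Y)/G$, and your five-lemma chase at $n=1$ (carried out by hand at the pointed-set term $\pi_0(F_1)$) goes through. Your instinct that the $n=1$ step is ``the main obstacle'' was exactly right; it is a genuine obstruction, not merely a delicate point to be granted.
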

\begin{proof}
It follows from Five Lemma and Diagram (\ref{ex1}).
\end{proof}
Let $X$ be a space and $\pi < \pi_1(X)$ a subgroup of the fundamental group
of $X$. If the left coset space $\pi_1^{qtop}(X)/\pi$ has no nonconstant paths, then there is a rigid
 fibration $p :E\longrightarrow X$ with $p_*\pi_1(E) = \pi$.
\begin{remark}
$(i)$ Applying Theorem \ref{b1}, the results of Corollaries \ref{hom} and \ref{five} hold, provided the left coset spaces $\pi_1^{qtop}(X)/H$ and $\pi_1^{qtop}(Y)/G$ has no nonconstant paths. \\
$(ii)$ Using part $(ii)$ of Theorem \ref{ex12} and a similar argument we can conclude Corollaries \ref{hom} and \ref{five} for $\pi_n^{qtop}$.
\end{remark}







\bibliography{mybibfile}

\end{document}